\documentclass[12pt]{article}
\usepackage{amsmath}
\usepackage{amsthm}
\usepackage{amssymb}

\newtheorem{thm}{Theorem}
\newtheorem{cor}{Corollary}
\newtheorem{prop}{Proposition}

\newtheorem{ex}{Example}

\usepackage{tikz}
\usetikzlibrary{decorations.markings, arrows}
\usepackage{xparse}

\begin{document}

\vspace*{30px}

\begin{center}\Large
\textbf{Computing Hyperfibonacci Numbers by Means of Matrix Transformations and Jordan Forms}
\bigskip\large

\bigskip

Petra Marija De Micheli Vitturi\\
Faculty of Electrical Engineering, Mechanical Engineering and Naval Architecture, 
University of Split,\\ Split, Croatia\\
{\tt pgojun@fesb.hr}
\bigskip

Nevena Jakovčević Stor\\
Faculty of Electrical Engineering, Mechanical Engineering and Naval Architecture, 
University of Split,\\ Split, Croatia\\
{\tt nevena@fesb.hr}


\end{center}


\begin{abstract} 
The Hyperfibonacci sequence of the $r$th generation is defined recursively as a generalization of Fibonacci numbers, where each term is obtained by summing the terms of the Hyperfibonacci sequence of the preceding generation. We introduce the transformation matrix for Hyperfibonacci numbers, which enables us to determine the next term in a given generation. We explore the algebraic structure of that matrix, and its power of $n$, similarity transformations between these matrices and their Jordan canonical forms.  Finally, we analyze the powers of these matrices using their Jordan forms, obtaining compact and elegant formulas for expressing  $r$-generation Hyperfibonacci numbers in terms of Fibonacci numbers. 
\end{abstract}

\noindent {\bf Keywords:} Hyperfibonacci numbers, Fibonacci numbers, Matrix representation, Jordan decomposition \\
\noindent {\bf AMS Mathematical Subject Classifications:} 11B39, 15A20, 11C20

\section{Introduction}

Hyperfibonacci numbers were introduced by A. Dil and I. Mez\"{o} in \cite{DillM} as follows: 

For nonnegative integers $n$ and $r$, the Hyperfibonacci numbers of the $r$th generation, denoted by $F_n^{(r)}$, are given by recurrence relation
\begin{equation}\label{F-def}
F_n^{(r)}= \sum \limits_{k=0}^{n} F_k^{(r-1)} \text{, for } r>0 ,
\end{equation}
with initial conditions $F_n^{(0)}= F_n$, for $n\geq 0$, and $F_0^{(r)}=0$, $F_1^{(r)}=1$, for $r\geq 0$.

Directly from the definition, one can find the relation
\begin{equation}\label{F-rekurzija}
F_{n}^{(r)} = F_{n-1}^{(r)} + F_{n}^{(r-1)}. 
\end{equation}

Using relation  \eqref{F-def}, we give Hyperfibonacci numbers $F_n^{(r)}$, for $n=0,1, \dots, 9$ and $r=0,1, \dots , 5$. The results are listed in Table \ref{F_n^r}.

\begin{table}[h]
\centering
\begin{tabular}{ c|r r r r r r r r r r r r} 

$r \backslash n$ & $0$ & $1$ & $2$ & $3$ & $4$ & $5$ &  $6$ & $7$ & $8$  & $9$  & $\cdots$  \\ 
 \hline 
 $0$  & 0 & 1 & 1 & 2 & 3 & 5 & 8 & 13 & 21 & 34 & $ \cdots$ \\
 $1$ & 0 &  1 & 2 & 4 & 7 & 12 & 20 & 33 & 54 & 88 & $ \cdots$ \\
$2$  &  0 & 1 & 3 & 7 & 14 & 26 & 46 & 79  & 133 & 221 & $\cdots$ \\
 $3$ &  0 & 1 & 4 & 11 & 25 & 51 & 97 & 176 & 309 & 530  & $\cdots $ \\
 $4$ &  0 & 1 & 5 & 16 & 41 & 92 & 189 & 365 & 674 & 1204 & $\cdots$ \\
 $5$ &  0 & 1 & 6 &  22 & 63 & 155  &  344 & 709 & 1383 & 2587 & $\cdots$ \\
$ \vdots$ & $ \vdots$ & $ \vdots$ & $ \vdots$ & $ \vdots$ & $ \vdots$ & $ \vdots$ & $ \vdots$ & $ \vdots$ & $ \vdots$ & $ \vdots$ \\
 \end{tabular}
\caption{The Hyperfibonacci numbers} \label{F_n^r}
\end{table}

L. L. Christea et al. derived several additional identities \cite{CMU}, including the following,
\begin{equation*}
F_{n}^{(1)}= \frac{\varphi^{n+2} - \overline{\varphi}^{n+2}}{\sqrt{5}} - 1 =  F_{n+2} - 1 ,
\end{equation*}
\begin{equation*}
F_{n}^{(2)}= \frac{\varphi^{n+4} - \overline{\varphi}^{n+4}}{\sqrt{5}} - n-3 = F_{n+4} -n - 3 .
\end{equation*}
Furthermore, 
\begin{equation*}
\lim_{n \to \infty} \frac{F_{n+1}^{(r)}}{F_{n}^{(r)}} = \varphi,
\end{equation*}
where $\varphi$  and $\overline{\varphi}$ are the golden ratio and its conjugate, respectively, i.e.
$$\varphi =\frac{1+ \sqrt{5}}{2} , \, \,  \overline{\varphi} =\frac{1- \sqrt{5}}{2} .$$

Combinatorial interpretation of these numbers by means of tilings is known \cite{BB}. A. T. Benjamin et al. provided a combinatorial interpretation for some determinant identities involving Fibonacci numbers \cite{BCQ}. The various generalizations of Fibonacci numbers have been studied in the literature \cite{LM, EPM}. Furthermore, E. Karaduman investigated the determinants of the matrices obtained by sequences of some generalization of Fibonacci numbers \cite{EK}. As for Fibonacci numbers, it is known that these can be generated from the following matrix representation,
\begin{equation}\label{Fmatrix}
\begin{bmatrix}
F_{n+1} & F_n \\
F_n & F_{n-1}
\end{bmatrix} = \begin{bmatrix}
1  &1 \\
1 & 0
\end{bmatrix}^n .
\end{equation}
Taking the determinants of both sides, one can obtain the Cassini identity,
$$F_{n+1}F_{n-1} - F_{n}^2 = (-1)^n .$$

Being motivated by these results, in what follows, we study matrix transformation between Hyperfibonacci numbers in Section 2. We show that the powers of that matrix can be used for generating Hyperfibonacci numbers of the $r$th generation.  In Section 3, we employ Jordan decompositions of such matrices to derive representations of Hyperfibonacci numbers of the $r$th generation in terms of Fibonacci numbers.

\section{The Transformation Matrix}

In this section, we give the transformation matrix, which, when applied to the column vector of Hyperfibonacci numbers, generates the next vector in the same generations.

Let $A_r$ be a  $(r+2) \times (r+2)$ matrix of the form
\begin{equation*}
A_r := \begin{bmatrix}
1 & 1 & 0 &    &  \cdots  &   &  &  & 0 \\
0 & 1 & 1 & 0 &    & \cdots  &  &   & 0 \\
0 & 0 & 1 & 1 & 0 &  &  \cdots&   & 0 \\
 &  &  &  &  &  &  &   & \\
\vdots &  &  &  & \ddots &  &  &  & \vdots \\
 &  &  &  &  &  &  &   & \\
0 & & \cdots& &   & 0 & 1 & 1 & 0 \\
0 &  & \cdots&  & & 0 & 0 & 1 & 1 \\
0 & &\cdots &  & & 0 & 0 & 1 & 0 
\end{bmatrix}.
\end{equation*}
The matrix $A_r$ is defined by
$$
(A_r)_{i,i} = (A_r)_{i,i+1} = 1, \quad 1 \le i \le r+1,
$$
and
$$
(A_r)_{r+2,\,r+1} = 1,
$$
with all remaining entries equal to zero.

\begin{prop}
For a positive integer $n$ and a nonnegative integer $r$, the following holds
\begin{equation*}
A_r \cdot 
 \begin{bmatrix}
F_{n-1}^{(r)} \\
F_{n }^{(r-1)} \\
F_{n+1}^{(r-2)} \\
 \\
 \vdots \\
 \\
F_{n+r-1}^{(1)} \\
F_{n+r} \\
F_{n+r-1}
\end{bmatrix} 
=
 \begin{bmatrix}
F_{n}^{(r)} \\
F_{n+1 }^{(r-1)} \\
F_{n+2}^{(r-2)} \\
 \\
 \vdots \\
 \\
F_{n+r}^{(1)} \\
F_{n+r+1} \\
F_{n+r}
\end{bmatrix} .
\end{equation*}
\end{prop}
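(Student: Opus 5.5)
Since the matrix $A_r$ has only a few nonzero entries, I will simply compute the product row by row and match each row against the target vector. Index the input vector by $v_i = F^{(r-i+1)}_{n+i-2}$ for $1 \le i \le r+1$. With the convention $F^{(0)}_m = F_m$, the entry $v_{r+1}$ is $F_{n+r-1}^{(0)} = F_{n+r-1}$. The last input entry is $v_{r+2} = F_{n+r-1}$. The target vector is $w_i = F^{(r-i+1)}_{n+i-1}$ for $1\le i\le r+1$, together with $w_{r+2} = F_{n+r}$.

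The plan has three cases, which together cover all rows.

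\emph{Rows $1\le i\le r$.} Row $i$ of $A_r$ has ones in positions $i$ and $i+1$, so the $i$th entry of the product is
\[
v_i+v_{i+1} = F^{(r-i+1)}_{n+i-2} + F^{(r-i)}_{n+i-1}.
\]
Put $s = r-i+1 \ge 1$ and $m = n+i-1$. The sum becomes $F^{(s)}_{m-1} + F^{(s-1)}_m$, which equals $F^{(s)}_m = w_i$ by \eqref{F-rekurzija}.

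\emph{Row $r+1$.} This row also has ones in positions $r+1$ and $r+2$, so the entry is
\[
v_{r+1}+v_{r+2} = F_{n+r-1} + F_{n+r-1}.
\]
The target is $w_{r+1} = F_{n+r+1}$ as displayed in the statement, so I must be careful here. The statement lists the $(r+1)$st input entry as $F_{n+r}$, not $F_{n+r-1}$. So the displayed vector is $(\dots, F^{(1)}_{n+r-1}, F_{n+r}, F_{n+r-1})$, and its last two entries form a consecutive Fibonacci pair. The row $r+1$ sum is then $F_{n+r} + F_{n+r-1} = F_{n+r+1}$ by the Fibonacci recurrence.

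\emph{Row $r+2$.} This row has a single one, in position $r+1$, so it simply copies $F_{n+r}$ into the last slot, as required.

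This has a knock-on effect on row $r$. With the corrected entry, row $r$ gives $F^{(1)}_{n+r-1} + F_{n+r}$, which is $F^{(1)}_{n+r}$ by \eqref{F-rekurzija} with generation index $1$. So the general computation in the first case applies to every row except the bottom two, which are handled directly by the Fibonacci recurrence.

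The main obstacle is purely bookkeeping. One has to align the pattern $F^{(r-k)}_{n+k-1}$ precisely with the bottom entries, where the superscript reaches $0$ and the last entry deviates from the pattern. One also has to check the edge case $r=0$. There $A_0 = \begin{bmatrix}1&1\\1&0\end{bmatrix}$, the vector is $(F_n, F_{n-1})^T$, and the claim is just the classical step behind \eqref{Fmatrix}. The hypothesis $n\ge1$ guarantees that every index is nonnegative, so \eqref{F-rekurzija} (valid for $n\ge1$, $r\ge1$) is applicable in each use.
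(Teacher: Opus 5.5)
Your method is the paper's method: multiply row by row, use \eqref{F-rekurzija} for rows $1,\dots,r$, and use the Fibonacci recurrence for row $r+1$. However, your repair of the index mismatch leaves a real gap.

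You rightly noticed that the displayed vector follows no single pattern. The top entries $F^{(r)}_{n-1},F^{(r-1)}_{n},F^{(r-2)}_{n+1},\dots$ are $v_i=F^{(r-i+1)}_{n+i-2}$. For the identity to hold, this forces $v_r=F^{(1)}_{n+r-2}$, $v_{r+1}=F_{n+r-1}$ and $v_{r+2}=F_{n+r-2}$. The displayed bottom entries $F^{(1)}_{n+r-1},F_{n+r},F_{n+r-1}$ are each one index higher, and the output vector has the same defect.

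You then change only $v_r$ and $v_{r+1}$, and claim your first case still covers rows $1,\dots,r-1$. It does not. With $v_r=F^{(1)}_{n+r-1}$, row $r-1$ gives $F^{(2)}_{n+r-3}+F^{(1)}_{n+r-1}=F^{(2)}_{n+r-2}+F_{n+r-1}$, not $F^{(2)}_{n+r-2}$. Making that row work requires shifting $v_{r-1}$ and $w_{r-1}$ up by one as well. That shift propagates all the way to $v_1=F^{(r)}_{n}$, which contradicts the displayed $F^{(r)}_{n-1}$.

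For $r=1$ the inconsistency is explicit: your first case uses $v_1=F^{(1)}_{n-1}$, while your knock-on step uses $v_1=F^{(1)}_{n}$. So, as written, different rows are checked against different vectors, and no single vector has been shown to satisfy the identity.

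The statement itself is off by one. The paper's proof has the same slip: its rows $1,\dots,r$ use $F^{(r-i+1)}_{n+i-2}+F^{(r-i)}_{n+i-1}$, but its row $r+1$ uses $F_{n+r}+F_{n+r-1}$. A correct proof therefore has to commit to one indexing and say so. There are two options.

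\emph{Keep the displayed top.} Take $v_i=F^{(r-i+1)}_{n+i-2}$ for $i\le r+1$ and $v_{r+2}=F_{n+r-2}$, with image $w_i=F^{(r-i+1)}_{n+i-1}$ and $w_{r+2}=F_{n+r-1}$. Row $r+1$ then reads $F_{n+r-1}+F_{n+r-2}=F_{n+r}$. For $r=0$, $n=1$ this needs $F_{-1}=1$.

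\emph{Keep the displayed bottom.} Take $v_i=F^{(r-i+1)}_{n+i-1}$ for $i\le r+1$ and $v_{r+2}=F_{n+r-1}$, with image $w_i=F^{(r-i+1)}_{n+i}$ and $w_{r+2}=F_{n+r}$. This choice matches your row $r$, $r+1$, $r+2$ computations and your $r=0$ check $(F_n,F_{n-1})^T$ verbatim. Your first case then goes through uniformly with $s=r-i+1$ and $m=n+i$, and your remark that $n\ge 1$ keeps every index nonnegative remains valid.

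With one formula for $v$ used in every row, your computation becomes a complete proof.
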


\begin{proof}

The proof is obtained by multiplication and by using the relation  (\ref{F-rekurzija}). We use the following notation,
\begin{equation*}
\Phi_{n, r}= 
 \begin{bmatrix}
F_{n}^{(r)} \\
F_{n+1 }^{(r-1)} \\
F_{n+2}^{(r-2)} \\
 \\
 \vdots \\
 \\
F_{n+r}^{(1)} \\
F_{n+r+1} \\
F_{n+r}
\end{bmatrix} .
\end{equation*}
Multiplying the $i$th row of the matrix $A_r$ by $\Phi_{n-1,r}$ and using the relation \eqref{F-rekurzija}, give
$$ F_{n+i-2}^{(r-i+1)} + F_{n+i-1}^{(r-i)} = F_{n+i-1}^{(r-i+1)}, $$
for $i= 1, \dots ,r$.
When multiplying the $(r+1)$th row of the matrix $A_r$ by $\Phi_{n-1,r}$
we obtain $$ F_{n+r} + F_{n+r-1} = F_{n+r+1}.$$ 
Multiplication of the last row of the matrix $A_r$ with $\Phi_{n-1,r}$
equals the last element of the matrix $\Phi_{n,r}$, which completes the proof.
\end{proof}

For $r=0$, the matrix $A_r$ has the form
 \begin{equation*}
A_0=\begin{bmatrix}
1  &1 \\
1 & 0
\end{bmatrix} .
\end{equation*}
Raising the matrix $A_0$ to the power of $n$, as we mention before in the relation \eqref{Fmatrix}, we get
\begin{equation*}
A_0^n= \begin{bmatrix}
F_{n+1} & F_n \\
F_n & F_{n-1}
\end{bmatrix}. 
\end{equation*}
Moreover, the first few powers of the matrix $A_1$ are the following
\begin{equation*}
A_1=\begin{bmatrix}
1 & 1 & 0 \\
0 & 1 & 1 \\
0 & 1 & 0 
\end{bmatrix} = \begin{bmatrix}
1 & F_1^{(1)} & F_0^{(1)}  \\
0 & F_2 & F_1 \\
0 & F_1 & F_0 
\end{bmatrix} ,
\end{equation*}
\begin{equation*}
A_1^2=\begin{bmatrix}
1 & 1 & 0 \\
0 & 1 & 1 \\
0 & 1 & 0 
\end{bmatrix}  \begin{bmatrix}
1 & 1 & 0  \\
0 & 1 & 1 \\
0 & 1 & 0 
\end{bmatrix}=\begin{bmatrix}
1 & 2 & 1 \\
0 & 2 & 1 \\
0 & 1 & 1 
\end{bmatrix} = \begin{bmatrix}
1 & F_2^{(1)} & F_1^{(1)}  \\
0 & F_3 & F_2 \\
0 & F_2 & F_1 
\end{bmatrix} ,
\end{equation*}

\begin{equation*}
A_1^3=\begin{bmatrix}
1 & 1 & 0 \\
0 & 1 & 1 \\
0 & 1 & 0 
\end{bmatrix}  \begin{bmatrix}
1 & 2 & 1 \\
0 & 2 & 1 \\
0 & 1 & 1  
\end{bmatrix}=\begin{bmatrix}
1 & 4 & 2 \\
0 & 3 & 2 \\
0 & 2 & 1 
\end{bmatrix} = \begin{bmatrix}
1 & F_3^{(1)} & F_2^{(1)}  \\
0 & F_4 & F_3 \\
0 & F_3 & F_2
\end{bmatrix} .
\end{equation*}

In the following proposition, we present a general form for the powers of $n$ of the matrix $A_r$.

\begin{prop}
For a positive integer $n$ and a nonnegative integer $r$, $n\geq r$, the powers of $n$ of the matrix $A_r$ are
\begin{equation*}
A_r^n = \begin{bmatrix}
{n \choose 0 } & {n \choose 1}  & {n \choose 2 }  &    &  \cdots  &  & & {n \choose r-1 }  & F_{n-r+1}^{(r)} & F_{n-r}^{(r)} \\
0 & {n \choose 0 }  & {n \choose 1 }  & {n \choose 2 }  &    & \cdots  & & {n \choose r-2 }  & F_{n-r+2}^{(r-1)}  & F_{n-r+1}^{(r-1)} \\
0 & 0 & {n \choose 0 }  & {n \choose 1 }  & {n \choose 2 }  & & \cdots   & {n \choose r-3 } &  F_{n-r+3}^{(r-2)} & F_{n-r+2}^{(r-2)} \\
 &  &  &  &  &  &  &  & & \\
  &  &  &  &  &  &  & &  & \\
\vdots &  &  & & \ddots  & &  &  & \vdots & \vdots \\
 &  &  &  &  &  &  & &  & \\
  &  &  &  &  &  &  & &  & \\
0 & & \cdots& &  & & 0 & {n \choose 0 }  & F_n^{(1)} & F_{n-1}^{(1)} \\
0 &  & \cdots&  & & & 0 & 0 & F_{n+1} & F_n \\
0 & &\cdots &  & & & 0 & 0 & F_n & F_{n-1}
\end{bmatrix}.
\end{equation*}
\end{prop}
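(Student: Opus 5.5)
Induction on $n$ will not work directly, because the hypothesis $n\ge r$ couples the two parameters. I will instead exploit the block structure of $A_r$ and reduce everything to the preceding proposition together with the recurrence \eqref{F-rekurzija}. Write
\[
A_r=\begin{bmatrix} J & E\\ 0 & A_0\end{bmatrix},
\]
where $J$ is the $r\times r$ upper bidiagonal matrix with ones on the diagonal and superdiagonal, and $E$ is the $r\times 2$ matrix whose only nonzero entry is a $1$ in position $(r,1)$. Then $J=I+N$ with $N$ nilpotent, and $N^k$ has ones on the $k$th superdiagonal. By the binomial theorem, $J^n=\sum_k \binom nk N^k$, which is exactly the upper-triangular Toeplitz block $\binom{n}{j-i}$ in the statement. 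The lower right block is $A_0^n$, which is given by \eqref{Fmatrix}. So the whole task is to identify the upper right $r\times 2$ block $B_n$, which is determined by $B_{n}=JB_{n-1}+EA_0^{n-1}$ (or equivalently $B_n=B_{n-1}A_0+J^{n-1}E$).

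I would prove the formula for $B_n$ by induction on $n$, using the column-vector identity of the previous proposition rather than raw entry manipulation. The last column of the claimed $A_r^n$ is the vector $\Phi_{n-r-1,r}$ in the notation of that proof. The second-to-last column is $\Phi_{n-r,r}$. Since $A_r^{n+1}=A_r\cdot A_r^n$, the previous proposition sends $\Phi_{m-1,r}\mapsto\Phi_{m,r}$. Hence if the last two columns of $A_r^n$ are $(\Phi_{n-r,r},\Phi_{n-r-1,r})$, the last two columns of $A_r^{n+1}$ are $(\Phi_{n-r+1,r},\Phi_{n-r,r})$, which is what the statement predicts. The first $r$ columns are already handled by the binomial argument. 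So the induction step is immediate, and only the base case remains.

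The main obstacle is the base case and the range of validity. The previous proposition requires its index $n\ge1$, so I need $n-r-1\ge 0$ in the shifted sense, and the entries $F^{(r)}_{n-r}$ must make sense. I would take the base case to be $n=r$. There I must verify directly that the last two columns of $A_r^r$ are $\Phi_{1,r}$ and $\Phi_{0,r}$, i.e. that the entries are $F^{(r-i+1)}_{i}$ and $F^{(r-i+1)}_{i-1}$.

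To get the base case I would compute the last two columns of $A_r^r$ by a second induction, this time on $k$ for $A_r^k$ with $k\le r$. Starting from $A_r^0=I$ and multiplying by $A_r$, I would track the columns $A_r^k e_{r+1}$ and $A_r^k e_{r+2}$. These are supported in the bottom $k+2$ rows, and their entries satisfy the same recurrence \eqref{F-rekurzija}. The boundary values $F^{(s)}_0=0$ and $F^{(s)}_1=1$ act as the cut-off for the zero entries higher up. The check is to confirm that at $k=r$ the top entries are exactly $F^{(r)}_1=1$ and $F^{(r)}_0=0$, which agrees with $e_1$-row entries of $A_r^r$ being $1$ and $0$ there. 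Together with the induction above, this completes the plan.
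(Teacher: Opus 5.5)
Your proposal is correct in substance, and its inductive step is the paper's. The paper also passes from $A_r^n$ to $A_r^{n+1}=A_rA_r^n$ and checks the last two columns entry by entry with \eqref{F-rekurzija}. You package that check as the column-shift proposition and replace Pascal's rule by the binomial expansion of $(I+N)^n$; both changes are cosmetic. Where you genuinely differ is the base case. The paper never proves $n=r$ for general $r$: ``taking into account the previous examples'' only covers $r\le 1$. Your second induction therefore fills a real hole in the paper's argument. Your opening claim that induction on $n$ ``will not work directly'' is unfounded, though: the coupling $n\ge r$ only affects where the induction starts.

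Two repairs are needed to make your version airtight.

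First, do not cite the previous proposition as printed, because $\Phi_{n,r}$ there is off by one internally. Its early entries are $F^{(r-i+1)}_{n+i-1}$, but its $r$th entry is $F^{(1)}_{n+r}$, followed by $F_{n+r+1},F_{n+r}$. Your identification of the last column as $\Phi_{n-r-1,r}$ matches only the latter reading. State the shift yourself: let $w_m$ have entries $F^{(r-i+1)}_{m+i}$ for $1\le i\le r$, followed by $F_{m+r+1},F_{m+r}$. Then $A_rw_m=w_{m+1}$ for every $m\ge -1$, since \eqref{F-rekurzija} is only invoked at indices $\ge 1$. The case $m=-1$ is exactly what the step $A_r^r\to A_r^{r+1}$ needs, because the last column of $A_r^r$ is $w_{-1}$, with top entry $F^{(r)}_0$. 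Your own range remark ($n-r-1\ge 0$) excludes this case, yet you start at $n=r$.

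Second, state the base-case induction precisely. Set $F^{(s)}_m:=0$ for $m<0$ and $s\ge1$. With this extension, \eqref{F-rekurzija} still holds for all $m$ when $s\ge 2$, and for $m\ge 0$ when $s=1$. Those are the only instances the step from $A_r^k$ to $A_r^{k+1}$ uses. A single induction from $A_r^0=I$ then gives, for all $k\ge 0$,
$(A_r^k)_{i,r+1}=F^{(r-i+1)}_{k-r+i}$ and $(A_r^k)_{i,r+2}=F^{(r-i+1)}_{k-r+i-1}$.
This merges your two inductions into one. The hypothesis $n\ge r$ then simply marks where no artificial zeros appear. The base case needs this full formula for every entry of the last two columns of $A_r^r$, not just the check of the top entries $F^{(r)}_1=1$ and $F^{(r)}_0=0$.
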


\begin{proof}
Taking into account the previous examples, we continue with the proof by induction.
%

Let us assume that matrix $A_r^{n}$ has the required form for $n\geq r$. Now, we shall prove the statement for the matrix $A_r^{n+1}=A_r \cdot A_r^n$.

The first $r$ rows of the matrix $A_r$, when multiplying the first $r$ columns of the matrix $A_r^n$, form an upper-triangular $r \times r$ block with binomial coefficients. The diagonal entries are all 1, 
$${n \choose 0} = 1 = {n+1 \choose 0}, $$ 
while the other nonzero entries are obtained as sums of two binomial coefficients with the same upper index and consecutive lower indices. More precisely, if $i$th row of $A_r$ is multiplied by $j$th column of  $A_r^n$, $1\leq i < j \leq r$, then we have $${n \choose j-i-1} + {n \choose j-i-2} = {n+1 \choose j-i-1}. $$

Multiplying two last rows $A_r$ by first $r$ columns of $A_r^n$, yield zero matrix, 
$$(A_r \, A_r^n)_{r+1:r+2,1:r} = \mathbf{0} ,$$
where $\mathbf{0} $ denotes the zero matrix of the corresponding size.

It remains to check the multiplication by the last two columns of the matrix $A_r^n$. Multiplying it with the first $r$ rows of the matrix $A_r$ and  by using \eqref{F-rekurzija} we have
$$
(A_r^n)_{i,r+1}=F_{n-r+i}^{(r-i+1)} + F_{n-r+i+1}^{(r-i)} = F_{n-r+i+1}^{(r-i+1)},
$$
and
$$
(A_r^n)_{i,r+2}=F_{n-r+i-1}^{(r-i+1)} + F_{n-r+i}^{(r-i)} = F_{n-r+i}^{(r-i+1)},
$$
for $1 \le i \le r$. Finally, the last two rows of matrix $A_r$ multiplied by the last two columns of matrix $A_r^n$ give the corresponding Fibonacci numbers. 

\end{proof}

\begin{ex}
One can find that the $10$th power of $A_3$ is
\begin{equation*}
A_3^{10}=\begin{bmatrix}
1 & 10 & 45 & 309 & 176 \\
0 & 1 & 10 & 221 & 133 \\
0 & 0 & 1 & 143 & 88 \\
0 & 0 & 0 & 89 & 55 \\
0 & 0 & 0 & 55 & 34
\end{bmatrix} = \begin{bmatrix}
{10 \choose 0} & {10 \choose 1} & {10 \choose 2}   & F_{8}^{(3)} & F_{7}^{(3)}  \\
0 & {10 \choose 0} & {10 \choose 1}  & F_{9}^{(2)} & F_{8}^{(2)} \\
0 & 0 & {10 \choose 0} & F_{10}^{(1)} & F_9^{(1)} \\
0 & 0 & 0 & F_{11} & F_{10} \\
0 & 0 & 0 & F_{10} & F_{9}
\end{bmatrix} .
\end{equation*}
\end{ex}

\section{Jordan Form}

Motivated by the occurrence of Hyperfibonacci numbers of the $r$th generation in the powers of the matrix
$A_r$, we proceed to explore identities arising from matrix decompositions and from the structural properties of $A_r$.

It should be noted that the matrix $A_r$ is diagonalizable only for $r = 0,1 $.

\begin{equation*}
A_0= \begin{bmatrix}
\overline{\varphi}  & \varphi \\
1 & 1
\end{bmatrix} 
 \begin{bmatrix}
\overline{\varphi}  &0 \\
0 & \varphi
\end{bmatrix} 
 \begin{bmatrix}
\frac{-1}{\sqrt{5}} &\frac{\varphi}{\sqrt{5}}\\
\frac{1}{\sqrt{5}}  & -\frac{\overline{\varphi}}{\sqrt{5}}
\end{bmatrix} ,
\end{equation*}

\begin{equation*}
A_1= \begin{bmatrix}
1 & \frac{ \overline{\varphi }}{ -\varphi }  &  \frac{ \varphi }{ -\overline{\varphi} }  \\
0 & \overline{\varphi}  & \varphi \\
0 & 1 & 1
\end{bmatrix} 
 \begin{bmatrix}
 1 & 0 & 0 \\
0 & \overline{\varphi}  &0 \\
0 & 0 & \varphi
\end{bmatrix} 
 \begin{bmatrix}
 1 & 1 & 1 \\
0 & \frac{-1}{\sqrt{5}} & \frac{\varphi}{\sqrt{5}} \\
0 & \frac{1}{\sqrt{5}}  & -\frac{\overline{\varphi}}{\sqrt{5}} 
\end{bmatrix} .
\end{equation*}
Since the matrix $A_r$ is not diagonalizable for $r \ge 2$, we consider its Jordan canonical decomposition (see e.g. \cite{HJ},\cite{GVL}).

Let us recall, for $A \in \mathbb{C}^{\ n\times n}$, there exists an invertible matrix $P \in \mathbb{C}^{n \times n}$ such that
\[
A = P J P^{-1},
\]
where $J$ is a \emph{Jordan matrix} (or \emph{Jordan canonical form}) of $A$. 
The matrix $J$ is block-diagonal, consisting of Jordan blocks $J_k(\lambda)$, where each
\[
J_k(\lambda) =
\begin{pmatrix}
\lambda & 1 & 0 & \cdots & 0 \\
0 & \lambda & 1 & \cdots & 0 \\
\vdots & \ddots & \ddots & \ddots & \vdots \\
0 & \cdots & 0 & \lambda & 1 \\
0 & \cdots & \cdots & 0 & \lambda
\end{pmatrix} \in \mathbb{C}^{k \times k},
\]
corresponds to an eigenvalue $\lambda$ of $A$. 

Thus,
\[
J = \operatorname{diag}\big(J_{k_1}(\lambda_1), J_{k_2}(\lambda_2), \dots, J_{k_m}(\lambda_m)\big),
\]
where $m$ is the number of distinct eigenvalues and $k_i$ is the multiplicity of  eigenvalue $\lambda_i$, $1 \le i \le m$.


In the following proposition, we present the Jordan form of the matrix $A_r$.

\begin{prop}
For a nonnegative integer $r$, Jordan form of matrix $A_r$ is
$$A_r= P_r J_r P_r^{-1} ,$$
where  $$P_r = \begin{bmatrix}
1 & 0 & 0 &    &  \cdots  &   &  &  \frac{ \overline{\varphi }}{ (-\varphi)^{r} } & \frac{ \varphi }{ (-\overline{\varphi})^{r} } \\
0 & 1 & 0 & 0 &    & \cdots  &  & \frac{ \overline{\varphi }}{ (-\varphi)^{r-1} }  & \frac{ \varphi }{ (-\overline{\varphi})^{r-1} } \\
0 & 0 & 1 & 0 & 0 &  &  \cdots& \frac{ \overline{\varphi }}{ (-\varphi)^{r-2} }   &  \frac{ \varphi }{ (-\overline{\varphi})^{r-2} }  \\
 &  &  &  &  &  &  &   & \\
\vdots &  &  &  & \ddots &  &  &  & \vdots \\
 &  &  &  &  &  &  &   & \\
0 & & \cdots& &   & 0 & 1 & \frac{ \overline{\varphi }}{ -\varphi }  & \frac{ \varphi }{ -\overline{\varphi} } \\
0 &  & \cdots&  & & 0 & 0 & \overline{\varphi } & \varphi \\
0 & &\cdots &  & & 0 & 0 & 1 & 1 
\end{bmatrix},$$
$$J_r=\begin{bmatrix}
1 & 1 & 0 &    &  \cdots  &   &  &  & 0 \\
0 & 1 & 1 & 0 &    & \cdots  &  &   & 0 \\
0 & 0 & 1 & 1 & 0 &  &  \cdots&   & 0 \\
 &  &  &  &  &  &  &   & \\
\vdots &  &  &  & \ddots &  &  &  & \vdots \\
 &  &  &  &  &  &  &   & \\
0 & & \cdots& &   & 0 & 1 & 0 & 0 \\
0 &  & \cdots&  & & 0 & 0 & \overline{\varphi} & 0 \\
0 & &\cdots &  & & 0 & 0 & 0 & \varphi 
\end{bmatrix},$$ and 
$$P_r^{-1} = \begin{bmatrix}
1 & 0 & 0 &    &  \cdots  &   &  & -F_{r+1} & -F_r \\
0 & 1 & 0 & 0 &    & \cdots  &  & -F_r  & -F_{r-1} \\
0 & 0 & 1 & 0 & 0 &  &  \cdots&  -F_{r-1} & -F_{r-2} \\
 &  &  &  &  &  &  &   & \\
\vdots &  &  &  & \ddots &  &  &  & \vdots \\
 &  &  &  &  &  &  &   & \\
0 & & \cdots& &   & 0 & 1 & -F_2 & -F_1 \\
0 &  & \cdots&  & & 0 & 0 & - \frac{1}{\sqrt{5}} & \frac{\varphi}{\sqrt{5}} \\
0 & &\cdots &  & & 0 & 0 & \frac{1}{\sqrt{5}} & -\frac{ \overline{\varphi} }{\sqrt{5}}
\end{bmatrix}.$$

\end{prop}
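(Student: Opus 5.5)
The plan is to verify the two matrix identities $P_r P_r^{-1} = I$ and $A_r P_r = P_r J_r$ directly. Together they give $A_r = P_r J_r P_r^{-1}$. Since $J_r$ is block diagonal, with one Jordan block $J_r(1)$ of size $r$ and the two simple eigenvalues $\overline{\varphi},\varphi$, this is the Jordan form. As a sanity check, the characteristic polynomial of $A_r$ is $(\lambda-1)^r(\lambda^2-\lambda-1)$ because $A_r$ is block upper triangular with diagonal blocks an $r\times r$ unipotent upper triangular matrix and $A_0$. I will write $P_r$ in block form
$$P_r=\begin{bmatrix} I_r & U\\ 0 & Q\end{bmatrix},\qquad Q=\begin{bmatrix}\overline{\varphi}&\varphi\\1&1\end{bmatrix},$$
where the $i$th row of $U$ is $\bigl(\overline{\varphi}/(-\varphi)^{r-i+1},\ \varphi/(-\overline{\varphi})^{r-i+1}\bigr)$. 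I will likewise write $P_r^{-1}=\begin{bmatrix} I_r & V\\ 0 & Q^{-1}\end{bmatrix}$, where $V$ has rows $(-F_{r-i+2},-F_{r-i+1})$ and $Q^{-1}$ is the $2\times2$ block already used for $A_0$ above.

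For the eigenvector part, I use that $1/(-\varphi)=\overline{\varphi}$ and $1/(-\overline{\varphi})=\varphi$. This makes the last two columns of $P_r$ equal to
$$(\overline{\varphi}^{\,r+1},\overline{\varphi}^{\,r},\dots,\overline{\varphi}^{\,2},\overline{\varphi},1)^T \quad\text{and}\quad (\varphi^{r+1},\dots,\varphi,1)^T.$$
Applying $A_r$ to $v=(\mu^{r+1},\dots,\mu,1)^T$ with $\mu^2=\mu+1$ gives:
\begin{itemize}
\item for rows $i\le r+1$, the entry $\mu^{k}+\mu^{k-1}=\mu^{k-1}(\mu+1)=\mu^{k+1}$, which is $\mu$ times the corresponding entry of $v$;
\item for the last row, the entry $\mu=\mu\cdot 1$.
\end{itemize}
Hence $A_rv=\mu v$, which handles the last two columns of $A_rP_r=P_rJ_r$. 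For the first $r$ columns, the standard basis vectors satisfy $A_re_1=e_1$ and $A_re_j=e_{j-1}+e_j$ for $2\le j\le r$, since column $j$ of $A_r$ has ones in rows $j-1$ and $j$. This is exactly the Jordan chain for $J_r(1)$ occupying the upper left block of $J_r$.

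For the inverse, the block product gives $P_rP_r^{-1}=\begin{bmatrix} I & V+UQ^{-1}\\ 0 & QQ^{-1}\end{bmatrix}$. The identity $QQ^{-1}=I$ is the $r=0$ case, already displayed for $A_0$. It remains to show $UQ^{-1}=-V$. Row $i$ of $U$ is $(\overline{\varphi}^{\,m},\varphi^{m})$ with $m=r-i+2$. Multiplying by $Q^{-1}$ gives
$$\Bigl(\frac{-\overline{\varphi}^{\,m}+\varphi^m}{\sqrt5},\ \frac{\varphi\overline{\varphi}^{\,m}-\overline{\varphi}\varphi^{m}}{\sqrt5}\Bigr).$$
By Binet's formula the first entry is $F_m=F_{r-i+2}$. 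Using $\varphi\overline{\varphi}=-1$, the second entry becomes $(\varphi^{m-1}-\overline{\varphi}^{\,m-1})/\sqrt5=F_{m-1}$. This matches $-V$ row by row.

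The main obstacle is purely bookkeeping: the exponents must be indexed consistently. I need to check that the entry $\overline{\varphi}/(-\varphi)^{k}$ in row $r+1-k$ really equals $\overline{\varphi}^{\,k+1}$, so that it pairs with $-F_{k+2}$ in $P_r^{-1}$. I will confirm the indexing against the $r=1$ decomposition displayed above before writing the general verification.
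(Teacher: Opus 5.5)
Your proposal is correct and follows essentially the paper's route: eigenvalues $1$ (multiplicity $r$), $\overline{\varphi}$, $\varphi$; the eigenvectors for $\overline{\varphi},\varphi$ as the last two columns of $P_r$; and an entrywise check of $P_rP_r^{-1}=I$ via Binet's formula. It is in fact a bit more complete, since you explicitly verify the Jordan chain $A_re_1=e_1$, $A_re_j=e_{j-1}+e_j$, which the paper's proof leaves implicit. There is one off-by-one slip in your closing remark: the entry $\overline{\varphi}/(-\varphi)^k=\overline{\varphi}^{\,k+1}$ in row $r+1-k$ pairs with $-F_{k+1}$, not $-F_{k+2}$, as your own computation with $m=r-i+2$ already shows. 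Also, when you check against the displayed $r=1$ factorization, note that the first row of its inverse factor should be $(1,-1,-1)$, not the printed $(1,1,1)$.
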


\begin{proof}

By solving the equation
\begin{equation*}
\det (A_r - \lambda I ) = 0 ,
\end{equation*}
we obtain the eigenvalues of the matrix $A_r$, 
$$\lambda_1=1, \, \lambda_2= \overline{\varphi},\text{ and } \lambda_3= \varphi,$$ where the eigenvalue $\lambda_1 = 1$ has multiplicity $r$, while the remaining two eigenvalues each have multiplicity $1$. Therefore, matrix $J_r$ consists of three Jordan blocks, $J_r(1)$, $J_1( \overline{\varphi})$ and $J_1(\varphi)$.

Furthermore, by solving the equation
\begin{equation*}
A_r v = \lambda v,
\end{equation*}
we obtain the corresponding eigenvectors, respectively, to the above eigenvalues
$$v_1=(1,0, \dots, 0),$$
$$v_2= \left(  \frac{ \overline{\varphi }}{ (-\varphi)^{r} } ,  \frac{ \overline{\varphi }}{ (-\varphi)^{r-1} } , \dots,  \frac{ \overline{\varphi }}{ -\varphi } ,  \overline{\varphi }, 1 \right),$$ 
and 
$$v_3=\left(\frac{ \varphi }{ (-\overline{\varphi})^{r}}, \frac{ \varphi }{ (-\overline{\varphi})^{r-1}}, \dots, \frac{ \varphi }{ -\overline{\varphi} }, \varphi , 1  \right).$$
This determines the form of the matrix $P_r$.

We now proceed to prove that $P_r^{-1}$ indeed represents the inverse matrix of $P_r$. The multiplication of the first $r$ rows of $P_r$ with the first $r$ columns of $P_r^{-1}$ yields an $r \times r$ block with ones on the diagonal and zeros elsewhere. Moreover, the multiplication of the first $r$ rows of $P_r$ with the last two columns of $P_r^{-1}$ results in the zero matrix. In particular, the multiplication of the
$i$th row, $1\leq i \leq r$, of  $P_r$ and $(r+1)$th column of $P_r^{-1}$  gives
\begin{align*}
\begin{split}
 & -F_{r+1-i+1} +  \frac{ \overline{\varphi}}{ (-\varphi)^{r-i+1} }\cdot \frac{-1}{\sqrt{5}} +  \frac{ \varphi }{ (-\overline{\varphi})^{r-i+1} }\cdot \frac{1}{\sqrt{5}}  \\[4pt]
 & = -F_{r-i+2} + \frac{\varphi (-\varphi)^{r-i+1}  - \overline{\varphi} (-\overline{\varphi})^{r-i+1} }{\sqrt{5} \, \varphi^{r-i+1}  \,\overline{\varphi}^{r-i+1} } \\[4pt]
 & = -F_{r-i+2} + \frac{(-1)^{r-i+1}(\varphi^{r-i+2}- \overline{\varphi}^{r-i+2})}{\sqrt{5} \, (-1)^{r-i+1}} \\
 & = -F_{r-i+2} + F_{r-i+2} \\
 & = 0 .
\end{split}
\end{align*}
In the same manner, multiplication by the last column of $P_r^{-1}$ also results in zero.

It is evident that multiplying the last two rows of the matrix $P_r$ with first $r$  columns of the matrix $P_r^{-1}$ results in the zero matrix, and that the product of the last two rows of the matrix $P_r$ and the last two columns of the matrix $P_r^{-1}$ yields the $2 \times 2$ identity matrix. This completes the proof.
\end{proof}

It is known that if $$A_r= P_r J_r P_r^{-1}$$ then  $$A_r^n= P_r J_r^n P_r^{-1}, \, n\geq 1 .$$

One can find that the $n$th power of $J_r$ has the following form

\begin{equation*}
J_r^n = \begin{bmatrix}
{n \choose 0 } & {n \choose 1}  & {n \choose 2 }  &    &  \cdots  &  & & {n \choose r-1 }  & 0 & 0 \\
0 & {n \choose 0 }  & {n \choose 1 }  & {n \choose 1 }  &    & \cdots  & & {n \choose r-2 }  & 0  & 0 \\
0 & 0 & {n \choose 0 }  & {n \choose 1 }  & {n \choose 2 }  & & \cdots   & {n \choose r-3 } &  0 & 0  \\
 &  &  &  &  &  &  &  & & \\
  &  &  &  &  &  &  & &  & \\
\vdots &  &  & & \ddots  & &  &  & \vdots & \vdots \\
 &  &  &  &  &  &  & &  & \\
  &  &  &  &  &  &  & &  & \\
0 & & \cdots& &  & & 0 & {n \choose 0 }  & 0 & 0\\
0 &  & \cdots&  & & & 0 & 0 &  \overline{\varphi}^{n} & 0 \\
0 & &\cdots &  & & & 0 & 0 & 0 & \varphi^{n}
\end{bmatrix}.
\end{equation*}

The idea is that by using the Jordan decomposition, we can express the matrix 
$A_r^n$ in a form that allows us to compute the $r$-th generation Hyperfibonacci numbers in terms of the Fibonacci numbers.

In matrix $A_r^n$ on first row and $(r+1)$th column  is Hyperfibonacci number $F_{n-r+1}^{(r)}$.
\begin{equation}\label{position}
\bigl(A_r^n\bigr)_{1,\,r+1} = F_{\,n-r+1}^{(r)}.
\end{equation}

Now we compute the element on the same position in the matrix $ P_r J_r^n P_r^ {-1}$.  Let us first multiply $ P_r J_r^n$,
\begin{equation*}
P_r J_r^n=\begin{bmatrix}
{n \choose 0 } & {n \choose 1}  & {n \choose 2 }  &    &  \cdots  &  & & {n \choose r-1 }  &  \frac{ \overline{\varphi}^{n+1}  }{ (-\varphi)^{r} }&  \frac{ \varphi^{n+1} }{ (-\overline{\varphi})^{r} } \\
0 & {n \choose 0 }  & {n \choose 1 }  & {n \choose 1 }  &    & \cdots  & & {n \choose r-2 }  & \frac{ \overline{\varphi}^{n+1}  }{ (-\varphi)^{r-1} } & \frac{ \varphi^{n+1} }{ (-\overline{\varphi})^{r-1} }  \\
0 & 0 & {n \choose 0 }  & {n \choose 1 }  & {n \choose 2 }  & & \cdots   & {n \choose r-3 } &  \frac{ \overline{\varphi}^{n+1}  }{ (-\varphi)^{r-2} } & \frac{ \varphi^{n+1} }{ (-\overline{\varphi})^{r-2} }  \\
 &  &  &  &  &  &  &  & & \\
  &  &  &  &  &  &  & &  & \\
\vdots &  &  & & \ddots  & &  &  & \vdots & \vdots \\
 &  &  &  &  &  &  & &  & \\
  &  &  &  &  &  &  & &  & \\
0 & & \cdots& &  & & 0 & {n \choose 0 }  & \frac{ \overline{\varphi}^{n+1}  }{ (-\varphi)}  & \frac{ \varphi^{n+1} }{ -\overline{\varphi} }  \\
0 &  & \cdots&  & & & 0 & 0 &  \overline{\varphi}^{n+1}  & \varphi^{n+1}  \\
0 & &\cdots &  & & & 0 & 0 &  \overline{\varphi}^{n}  & \varphi^{n}  
\end{bmatrix}.
\end{equation*}
Then we compute $A_r^n= P_r J_r^n P_r^{-1}$. On the position $ (P_r J_r^n P_r^{-1})_{1, r+1}$ we get
\begin{align*}
\begin{split}
& - \sum \limits_{k=0}^{r-1} {n \choose k } F_{r-k+1} +  \frac{ \overline{\varphi}^{n+1}  }{ (-\varphi)^{r} } \frac{-1}{\sqrt{5}}    +  \frac{ \varphi^{n+1} }{ (-\overline{\varphi})^{r} }  \frac{1}{\sqrt{5}}  \\[4pt]
&= - \sum \limits_{k=0}^{r-1} {n \choose k } F_{r-k+1}  + \frac{ \varphi^{n+r+1}-\overline{\varphi}^{n+r+1}}{\sqrt{5}} \\[4pt] 
&= - \sum \limits_{k=0}^{r-1} {n \choose k } F_{r-k+1}  + F_{n+r+1}.
\end{split}
\end{align*}

Which together with \eqref{position} gives
$$F_{n-r+1}^{(r)}= - \sum \limits_{k=0}^{r-1} {n \choose k } F_{r-k+1}  + F_{n+r+1} . $$
Finally, by substituting $n-r+1 \rightarrow n$ we have 
$$F_n^{(r)}=  F_{n+2r} - \sum \limits_{k=0}^{r-1}  {n+r-1 \choose k} F_{r-k+1 } .$$
This proves the following theorem.

\begin{thm}\label{tm3}
For a nonnegative integer $n$ and a positive integer $r$, the Hyperfibonacci numbers satisfy
\begin{equation*}
F_n^{(r)}=  F_{n+2r} - \sum \limits_{k=0}^{r-1} {n+r-1 \choose k} F_{r-k+1 }  .
\end{equation*}
\end{thm}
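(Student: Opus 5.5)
We will not rely on the Jordan-form computation above, since Proposition 2 (and hence the derivation preceding the statement) only covers $n\ge r$, i.e.\ $n\ge 1$ after the shift. Instead we give a direct argument valid for all $n\ge 0$. Write
\[
G(n,r) := F_{n+2r} - \sum_{k=0}^{r-1} \binom{n+r-1}{k} F_{r-k+1}
\]
for $n\ge 0$, $r\ge 0$. For $r=0$ the sum is empty, so $G(n,0)=F_n=F_n^{(0)}$. The plan is to show that $G$ satisfies the same recurrence \eqref{F-rekurzija} and the same boundary values as $F_n^{(r)}$. A double induction (outer on $r$, inner on $n$) then gives $G(n,r)=F_n^{(r)}$ for all $n\ge0$, $r\ge1$.

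\emph{Recurrence step.} For $n\ge 1$, $r\ge 1$ we verify $G(n,r)-G(n-1,r)=G(n,r-1)$.
\begin{itemize}
\item The Fibonacci parts give $F_{n+2r}-F_{n+2r-1}=F_{n+2r-2}=F_{n+2(r-1)}$.
\item For the sums, Pascal's rule gives $\binom{n+r-1}{k}-\binom{n+r-2}{k}=\binom{n+r-2}{k-1}$, with the $k=0$ term vanishing.
\item So the difference of the sums is $\sum_{k=1}^{r-1}\binom{n+r-2}{k-1}F_{r-k+1}$. Reindexing with $j=k-1$ turns this into $\sum_{j=0}^{r-2}\binom{n+r-2}{j}F_{r-j}$.
\end{itemize}
This last sum is exactly the sum appearing in $G(n,r-1)$, whose upper binomial index is $n+(r-1)-1$ and whose Fibonacci index is $(r-1)-j+1$. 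Hence the recurrence holds, including the case $r-1=0$, where both sides reduce to $F_n$.

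\emph{Boundary value.} We need $G(0,r)=0$ for $r\ge1$, that is,
\[
F_{2r}=\sum_{k=0}^{r-1}\binom{r-1}{k}F_{r-k+1}.
\]
Substituting $i=r-1-k$ rewrites the right side as $\sum_{i=0}^{m}\binom{m}{i}F_{i+2}$ with $m=r-1$. We prove the classical identity $\sum_{i=0}^m\binom{m}{i}F_{i+j}=F_{2m+j}$ from Binet's formula. Since $1+\varphi=\varphi^2$ and $1+\overline{\varphi}=\overline{\varphi}^2$, we get $\sum_i\binom{m}{i}\varphi^{i+j}=\varphi^j(1+\varphi)^m=\varphi^{2m+j}$, and likewise for $\overline{\varphi}$. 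Subtracting and dividing by $\sqrt5$ gives the identity, and with $j=2$ it yields $F_{2r}$, as needed. This identity is the only non-mechanical ingredient, and I expect it to be the main obstacle; the Binet argument disposes of it cleanly.

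\emph{Conclusion.} Assume $G(\cdot,r-1)=F^{(r-1)}_{\cdot}$. Then $G(0,r)=0=F_0^{(r)}$, and both $G(n,r)$ and $F_n^{(r)}$ obey the same first-order recurrence in $n$ with the same forcing term $F_n^{(r-1)}$. Induction on $n$ gives $G(n,r)=F_n^{(r)}$ for all $n$. As a consistency check, $G(1,r)=1$ should follow automatically, and indeed $G(1,r)=G(0,r)+G(1,r-1)=0+1$, in agreement with the convention $F_1^{(r)}=1$. This establishes Theorem~\ref{tm3} for all $n\ge 0$, and in particular covers the case $n=0$ not reached by the Jordan-form derivation.
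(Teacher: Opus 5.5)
Your proof is correct and complete, and it takes a genuinely different route from the paper. The paper \emph{derives} the formula. It computes the $(1,r+1)$ entry of $P_rJ_r^nP_r^{-1}$ as $F_{n+r+1}-\sum_{k=0}^{r-1}\binom{n}{k}F_{r-k+1}$ and identifies it with $F^{(r)}_{n-r+1}$ through Proposition 2. You instead \emph{verify} the closed form. You check that $G$ obeys \eqref{F-rekurzija} by Pascal's rule, and you obtain $G(0,r)=0$ from the Binet identity $\sum_i\binom{m}{i}F_{i+2}=F_{2m+2}$.

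The matrix approach explains where the formula comes from: the nilpotent part of the Jordan block at eigenvalue $1$ produces the binomial correction, and the simple eigenvalues $\varphi,\overline{\varphi}$ produce $F_{n+2r}$. Your approach needs the formula in advance. In exchange it is elementary, independent of Proposition 2 and of the explicit $P_r^{-1}$, and valid for every $n\ge 0$.

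Your remark about the range is also right. Proposition 2 assumes $n\ge r$, so the paper's derivation gives the theorem only for $n\ge 1$, yet Corollary~\ref{cor} is then read off from the case $n=0$. Your argument reverses that logic: you prove Corollary~\ref{cor} independently and use it as the boundary input.

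Within the paper's framework this gap could also be closed directly. The Jordan computation of the $(1,r+1)$ entry is valid for all $n\ge 0$. Moreover, $(A_r^{\,r-1})_{1,r+1}=0$, because the first $r+1$ rows of $A_r$ are upper bidiagonal, so row $1$ of $A_r^{\,m}$ is supported in columns $1,\dots,m+1$ for $m\le r$. Taking $n=r-1$ then gives exactly the $n=0$ case of the theorem.
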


\begin{ex}
 Evaluation of the Theorem \ref{tm3} identity for \( n = 8 \) and \( r = 3 \) gives
\begin{align*}
F_8^{(3)} 
&= F_{8 + 2\cdot3} - \sum_{k=0}^{2} {8 + 3 - 1 \choose k}  F_{3 - k + 1} \\[4pt]
&= F_{14} - \sum_{k=0}^{2} {10 \choose k} F_{4 - k} .
\end{align*}
Expanding the summation yields
$$\sum_{k=0}^{2} {10 \choose k} F_{4 - k} 
= {10 \choose 0} F_4  + {10 \choose 1} F_3 + {10 \choose 2} F_2 .$$
Since Fibonacci numbers in question are \( F_2 = 1 \), \( F_3 = 2 \), and \( F_4 = 3 \), we find
$$3\cdot 1 + 2\cdot 10 + 1\cdot 45= 3 + 20 + 45 = 68.$$
Therefore,
$$F_8^{(3)} = F_{14} - 68.$$
Since \( F_{14} = 377 \), it follows that
$$F_8^{(3)} = 377 - 68 = 309.$$
The computed result for $F_8^{(3)}$ is equal to the value given in the Table \ref{F_n^r}.
\end{ex}

Special case of Theorem \ref{tm3} for $n=0$, yields the following Corollary \ref{cor}.
\begin{cor} \label{cor}
For a nonnegative integer $r$, Fibonacci numbers satisfy
\begin{equation*}
F_{2r}=  \sum \limits_{k=0}^{r-1} {r-1 \choose k} F_{r-k+1 } .
\end{equation*}
\end{cor}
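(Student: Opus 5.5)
The plan is to specialize Theorem \ref{tm3} to $n=0$ and then read off the identity from the initial condition $F_0^{(r)}=0$. Substituting $n=0$ into Theorem \ref{tm3} gives
\begin{equation*}
F_0^{(r)} = F_{2r} - \sum_{k=0}^{r-1} {r-1 \choose k} F_{r-k+1}.
\end{equation*}
Since the definition imposes $F_0^{(r)}=0$ for all $r\ge 0$, rearranging yields $F_{2r}=\sum_{k=0}^{r-1}{r-1\choose k}F_{r-k+1}$ for every positive integer $r$. The case $r=0$ is immediate: the sum is empty, and $F_0=0$.

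The main obstacle is that Theorem \ref{tm3} was derived from Proposition 2, which assumes $n\ge r$. After the shift $n-r+1\to n$, the derivation only covers $n\ge 1$. So the value $n=0$, which corresponds to the power $A_r^{r-1}$, is not covered by the matrix argument as written. I would therefore verify the corollary independently rather than rely on that extrapolation.

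The cleanest independent route is a direct proof. Write $F_{r-k+1}$ and reindex with $j=r-1-k$, so the sum becomes $\sum_{j=0}^{r-1}{r-1\choose j}F_{j+2}$. Then apply the classical identity
\begin{equation*}
\sum_{j=0}^{m}{m\choose j}F_{j+s}=F_{2m+s}.
\end{equation*}
This identity follows from $\varphi^2=\varphi+1$: we have $\sum_j {m\choose j}\varphi^{j+s}=\varphi^s(1+\varphi)^m=\varphi^{2m+s}$, and likewise for $\overline{\varphi}$. Combining the two through Binet's formula gives the identity. With $m=r-1$ and $s=2$ it yields $F_{2r}$, exactly as required.

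As an alternative, one could justify $n=0$ within the paper's framework. This would mean checking that the closed form in Proposition 2 for $A_r^{n}$ remains valid for $n=r-1$, interpreting ${n\choose r-1}=0$ and $F_{0}^{(r)}=0$, after which the Jordan computation goes through unchanged. The Binet argument avoids this check and is the one I would present.
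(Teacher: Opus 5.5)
Your proof is correct, but the argument you would present differs from the paper's. The paper's proof is exactly your first paragraph: set $n=0$ in Theorem~\ref{tm3} and use $F_0^{(r)}=0$.

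Your objection to relying on that is well founded. Theorem~\ref{tm3} is read off from the $(1,r+1)$ entry of $A_r^n$ in Proposition~2, which assumes $n\ge r$, so after the shift $n-r+1\to n$ only $n\ge 1$ is covered. The theorem is also stated only for $r\ge 1$, so $r=0$ is not reached either.

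Your Binet argument is short, self-contained and valid for every $r\ge 0$. It shows the corollary has nothing specific to do with Hyperfibonacci numbers: it is the case $m=r-1$, $s=2$ of the classical identity $\sum_{j}{m\choose j}F_{j+s}=F_{2m+s}$. Since the corollary is equivalent to Theorem~\ref{tm3} at $n=0$, it also retroactively justifies that case of the theorem.

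The paper's route presents the identity as a boundary consistency condition of the matrix framework, and it is easy to repair. The factorization $A_r^n=P_rJ_r^nP_r^{-1}$ and the formula for $J_r^n$ hold for every $n\ge 0$. Hence the Jordan computation gives $(A_r^n)_{1,r+1}=F_{n+r+1}-\sum_{k=0}^{r-1}{n\choose k}F_{r-k+1}$ for all $n\ge 0$. At $n=r-1$ the left side vanishes: $A_r$ has nonzero entries only in positions $(i,j)$ with $j\le i+1$, so by induction $(A_r^m)_{1,j}=0$ whenever $j>m+1$.

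There is a small slip in your aside on this alternative. At $n=r-1$ the entry ${n\choose r-1}$ equals $1$, not $0$, and it needs no reinterpretation. The leading $r\times r$ block of $A_r^n$ is $(I+N)^n$ for every $n\ge 0$, with $N$ the $r\times r$ nilpotent shift, so those binomial entries are correct for all $n$. The only entry that has to be checked is $(A_r^{r-1})_{1,r+1}=0=F_0^{(r)}$, which is the vanishing just described.
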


 \section{Conclusion}
We have presented a matrix-based approach to compute Hyperfibonacci numbers of any generation $r$, using transformation matrices and their Jordan forms. This framework provides compact formulas linking Hyperfibonacci numbers to Fibonacci numbers and reveals the underlying algebraic structure. Our results offer an efficient computational method and a foundation for further exploration.



\end{document}